\newtheorem{thm}{Theorem}[section]
\newtheorem{lem}[thm]{Lemma}
\theoremstyle{remark}
\numberwithin{equation}{section}
\newcommand\ens[1]{\{1,\dots,#1\}}
\title{On the rate of convergence of the shape of Young diagrams associated with random words}
\author{Clément Deslandes\footnote{C.M.A.P. \'Ecole Polytechnique, Palaiseau, 91120, France \& Georgia Institute of Technology, Atlanta, GA, 30332, USA (\texttt{clement.deslandes@poytechnique.edu}).}\qquad Christian Houdré\footnote{School of Mathematics, Georgia Institute of Technology, Atlanta, GA, 30332, USA (\texttt{houdre@math.gatech.edu}).} \footnote{Research supported in part by the grant $\sharp 524678$ from the Simons Foundation. \newline\indent
		Keywords:  Random Words, Longest Increasing Subsequences, Young Diagrams, RSK Correspondence, KMT Approximation, Kiefer Processes, Weak Convergence, Last Passage Percolation, Random Matrices.
		\newline\indent
		MSC 2010: 05A05, 60C05, 60F05.}}
\begin{document}

\maketitle
\begin{abstract}
We revisit, beyond the uniform case, some aspects of the convergence of the cumulative shape of the RSK Young diagrams associated with random words, obtaining rates of convergence in Kolmogorov's distance.  Since the length of the top row of the diagrams is the length of the longest increasing subsequences of the word, a corresponding rate result follows.  This is then extended to the length of the longest common and increasing subsequences  in two or more random words.   
\end{abstract}
\section{Introduction}

Let $X_1,\dots,X_n$ be i.i.d.~random variables with values in a finite alphabet $\ens{m}$ and with probability mass function given by $p_1,\dots,p_m$. 
For $i\in\ens{m}$ and $j\in \{0,\dots,n\}$, and with elements of the notation in \cite{HL, BH, DH1}, let $N^{i}_{j}=\sum_{l=1}^j \mathds{1}_{X_l=i}$ be the number of letters $i$ within the 
first $j$ letters, and for $t\in [0,1]$, let \begin{equation}\label{defB}
\widetilde{B^n_i}(t)=\frac{N^{i}_{\lfloor tn \rfloor}-p_i \lfloor tn \rfloor}{\sigma_i\sqrt{n}},
\end{equation}
where $\sigma_i=\sqrt{p_i(1-p_i)}$.
Let $\Lambda=\{\lambda\in [0,1]^m : \lambda_1+\dots+\lambda_m=1\}$, and 
for $\lambda\in\Lambda$, let

\begin{equation}
\widetilde{V^{n}_i}(\lambda)=\sigma_i\left(\widetilde{B^n_i}(\lambda_1+\dots+\lambda_i)-\widetilde{B^n_i}(\lambda_1+\dots+\lambda_{i-1})\right),
\end{equation}
$i\in \{1, \dots, m\}$ with for $i=1$, the convention that $\widetilde{B^n_i}(\lambda_1+\dots+\lambda_{i-1}) 
= 0$. 

\noindent
Hence,  $LI_n$, the length of the longest increasing subsequences of $X_1\cdots X_n$ is given by:

\begin{equation}\label{exprlcin}
LI_n=\max_{0=k_0\leq k_1\leq \dots\leq k_m=n} \sum_{i=1}^m \left(N^i_{k_i}-N^i_{k_{i-1}}\right)=\max_{\substack {\lambda\in \Lambda_d}} \sum_{i=1}^m 
\left(n p_i\lambda_i+\sqrt{n}\widetilde{V^{n}_i}(\lambda)\right),
\end{equation}
where now $\Lambda_d:=\{\left({j_1}/{n},\dots,{j_m}/{n}\right):j_1,\dots,j_m\in \mathbb{N}, j_1+\dots+j_m=n\}$.

\noindent
For $\lambda\in\Lambda$ let, finally,
\begin{equation}\label{defZ}
Z_n(\lambda):=\sum_{i=1}^m \left(\sqrt{n} (p_i-p_{\max})\lambda_i+\widetilde{V^{n}_i}(\lambda)\right),
\end{equation}

\noindent
where $p_{\max} =  \max_{i=1, \dots, m}p_i$, so that 
\begin{equation}\label{exprlcinrec}
\frac{LI_n-np_{\max}}{\sqrt{n}}=\max_{\substack {\lambda\in \Lambda_d}}Z_n(\lambda).
\end{equation}

It is known (see \cite{HL,BH, DH1} and the references therein) that the limiting distribution of \eqref{exprlcinrec} is the distribution of $\max_{\substack {\lambda\in \Lambda}}Z'_n(\lambda)$ where $Z'_n$ is defined as  $Z_n$ but with $B^n_i$, a Brownian motion, instead of $\widetilde{B^n_i}$ (as stated more precisely in the sequel). Note that $Z'_n$ has the same distribution for all $n$, so that the limiting distribution above is well defined. Below, our main goal is to provide a rate of convergence for this result.  To do so, the  strategy is to use a KMT approximation to build a coupling: we will define on the same probability space $\widetilde{B}$ and $B$ that are very close. Then, when the letters are uniformly distributed, $Z_n(\lambda)$ simplifies to $Z_n(\lambda)= \sum_{i=1}^m \widetilde{V^{n}_i}(\lambda)$, and therefore it is straightforward to infer from the coupling the rate of convergence bound.  This can also be done for the other 
lines of the RSK Young diagrams associated with the word.  When the distribution is not uniform, the  strategy is to first approximate $\max_{\substack {\lambda\in \Lambda_d}}Z_n(\lambda)$ by $\max_{\substack {\lambda\in \Lambda^\prime_d}}Z_n(\lambda)$, where $\Lambda^\prime_d=\{\lambda\in \Lambda_d : p_i\neq p_{\max}\implies \lambda_i=0\}$, because then for each $\lambda\in \Lambda^\prime_d$, $Z_n(\lambda)=\sum_{i=1}^m \widetilde{V^{n}_i}(\lambda)$, as previously.  These approaches also give the convergence of moments.  To finish, the case of two or more common and increasing subsequences is addressed.  

\section{A coupling via KMT and rates of convergence}

To start with, we prove the following key coupling lemma:  
\begin{lem}\label{lemKMTsup}
Let $\alpha\geq 1$. For every $m\geq 2$, every probability mass function $p_1,\dots,p_m$, and every $n\ge 2$,  there exists a probability space with  $X_1,\dots,X_n, \widetilde{B^n}$, as above, and $B^n$ an $m$-dimensional Brownian motion with covariance matrix  $\Sigma:=\mathrm{Cov}\left((\mathds{1}_{X_1=i}/\sigma_i)_{1\leq i\leq m}\right)$, defined on it, such that
\begin{equation}\label{KMTsup}
\mathds{P}\left(\sup_{\substack{i \in\ens{m} \\ 0\leq t \leq 1}} \left| \sigma_i \widetilde{B^n_i}(t)-\sigma_i B^n_i(t)\right| \geq C\frac{\alpha(\log n)^2}{\sqrt{n}} \right)\leq \frac{2}{n^\alpha},
\end{equation}
where $C$ is a universal constant.
\end{lem}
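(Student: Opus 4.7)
The plan is to reduce the statement, via a multi-dimensional KMT-type strong invariance principle, to the approximation at integer times of the random walk formed by the centered rescaled indicator vectors by a Brownian motion with covariance $\Sigma$, and then to extend the resulting discrete-time coupling to all $t \in [0,1]$ by controlling the oscillations of the approximating Brownian motion on intervals of length $1/n$. Concretely, let $Y_l = ((\mathds{1}_{X_l=i}-p_i)/\sigma_i)_{1\le i\le m}$, so that $Y_1,\dots,Y_n$ are i.i.d.\ centered bounded $\mathbb{R}^m$-valued vectors with covariance $\Sigma$, and set $S_k=\sum_{l=1}^k Y_l$; then $\widetilde{B^n_i}(k/n) = S_k^i/\sqrt{n}$, and since $\sigma_i\le 1/2$ the problem at the discrete times $t=k/n$ reduces to bounding $\|S_k - W(k)\|_\infty$ for $W$ a Brownian motion on $[0,\infty)$ with covariance $\Sigma$.

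The first step is to invoke a multi-dimensional KMT approximation for i.i.d.\ random vectors with finite exponential moments (for example the theorems of Einmahl or Zaitsev): on an enlarged probability space carrying $Y_1,\dots,Y_n$ there exists such a $W$ satisfying, for every $x \ge 0$,
\begin{equation*}
\mathds{P}\left(\max_{1\le k\le n}\|S_k - W(k)\|_\infty \ge C_1 (\log n)^2 + x\right) \le C_2 e^{-c_3 x}.
\end{equation*}
Choosing $x = \alpha (\log n)/c_3$ bounds the right-hand side by $1/n^\alpha$, and setting $B^n(t) := W(nt)/\sqrt{n}$---which is then an $m$-dimensional Brownian motion on $[0,1]$ with covariance $\Sigma$---yields, after division by $\sqrt{n}$, a coupling for which the supremum in \eqref{KMTsup}, restricted to the grid $t\in\{k/n : 0 \le k \le n\}$, is at most $C\alpha(\log n)^2/\sqrt{n}$ with probability at least $1 - 1/n^\alpha$.

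The second step is to extend this from the grid to all of $[0,1]$: since $\widetilde{B^n_i}$ is constant on each interval $[k/n,(k+1)/n)$, it suffices to control $\max_i \sup_{|t-s|\le 1/n}|B^n_i(t)-B^n_i(s)|$. Standard Gaussian tail estimates (a L\'evy-type modulus of continuity bound applied to each component of $B^n$) give a bound of order $\sqrt{\alpha(\log n)/n}$ with probability at least $1 - 1/n^\alpha$, which is of smaller order than the target rate $(\log n)^2/\sqrt{n}$. A union bound over the KMT event and the modulus event then yields the stated inequality with the factor $2/n^\alpha$ on the right-hand side.

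The main obstacle is the first step: one must identify and cite the appropriate multi-dimensional KMT theorem producing the $(\log n)^2$ rate (rather than the one-dimensional $\log n$) together with an exponential tail, and verify that the constants can be arranged to be genuinely universal, independent of $m$ and of the probability vector $(p_i)$. Once the right ambient approximation result is in hand, the rescaling of time, the interpolation via the modulus of continuity, and the final union bound are all routine.
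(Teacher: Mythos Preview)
Your approach differs fundamentally from the paper's. You propose to apply a multi-dimensional KMT theorem (Einmahl/Zaitsev) directly to the $\mathbb{R}^m$-valued vectors $Y_l$, whereas the paper proceeds via the \emph{one-dimensional} empirical process: it couples i.i.d.\ uniforms $U_1,\dots,U_n$ (from which the $X_l$ are built by thresholding at the cumulative probabilities) to a Kiefer process $K(s,t)$ through the KMT theorem for empirical processes, and then \emph{defines} $B^n_i(t):=\big(K(p_1+\cdots+p_i,tn)-K(p_1+\cdots+p_{i-1},tn)\big)/(\sigma_i\sqrt{n})$. The supremum over $i\in\{1,\dots,m\}$ is thus absorbed into the supremum over $s\in[0,1]$ already present in the Kiefer-process KMT bound, and the covariance $\Sigma$ falls out automatically from the covariance structure of the Kiefer process. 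The passage from integer $l$ to continuous $l'\in[0,n]$ is then done on the Kiefer side via a Brownian-sheet tail bound.

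The point you yourself flag as ``the main obstacle'' is in fact a genuine gap for your route. In the known multi-dimensional strong approximations of Einmahl and of Zaitsev, the constants $C_1,C_2,c_3$ in your displayed inequality depend on the dimension $m$; moreover, for the vectors $Y_l=((\mathds{1}_{X_l=i}-p_i)/\sigma_i)_i$ as you define them, the sub-exponential parameter also depends on the distribution, since the coordinates are only bounded by $\max(p_i,1-p_i)/\sigma_i$, which blows up as some $p_i\to 0$ or $1$ (this second issue can be removed by coupling the unscaled vectors $(\mathds{1}_{X_l=i}-p_i)_i$ instead, since the target quantity is $\sigma_i\widetilde{B^n_i}$ anyway, but the $m$-dependence remains). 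There is, to my knowledge, no off-the-shelf multivariate KMT with constants uniform in the dimension, so your plan as written does not deliver the lemma as stated, where $C$ is claimed universal. The paper's Kiefer-process construction is precisely what buys this universality: the underlying KMT for the uniform empirical process is one-dimensional and its constants are absolute, at the price of the rate $(\log n)^2$ rather than $\log n$.
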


\begin{proof}
By the KMT approximation \cite{komlos1975approximation} (see also \cite{shorack2009empirical} for further details and extensive references, in particular on Kiefer processes), there exist a probability space with $(U_i)_{i\geq1}$ i.i.d. uniform on $[0,1]$ random variables and a Kiefer process $(K(s,t))_{\substack{s\in[0,1] \\ t\in[0,\infty)}}$ such that for all $x>0$, 
\begin{equation}
	\mathds{P}\left(\sup_{\substack{0\leq s\leq 1 \\ l\in\ens{n}}} \left|\frac{1}{\sqrt{n}} \sum_{k=1}^l \left(\mathds{1}_{U_k \leq s}-s\right) -\frac{K\left(s,l \right)}{\sqrt{n}} \right| \geq C_1\log n \frac{\log n + x}{\sqrt{n}} \right)\leq \mathrm{e}^{-x},
\end{equation}
where $C_1$ is a universal constant (throughout, $C_2,C_3, \dots$ are universal constants). In particular, for $x=\alpha \log n$ one gets

\begin{equation}\label{disc}
	\mathds{P}\left(\sup_{\substack{0\leq s\leq 1 \\ l\in\ens{n}}} \left|\frac{1}{\sqrt{n}} \sum_{k=1}^l \left(\mathds{1}_{U_k \leq s}-s\right) -\frac{K\left(s,l \right)}{\sqrt{n}} \right| \geq 2 C_1 \alpha \frac{(\log n )^2}{\sqrt{n}} \right)\leq \frac{1}{n^\alpha}.
\end{equation}
\noindent
To replace the discrete parameter $l\in\ens{n}$ by a continuous one $l'\in [0,n]$, note that 

\begin{align*}
\mathds{P}\left(\sup_{\substack{0\leq s\leq 1 \\ l\in\ens{n}}} \sup_{l'\in[l-1,l]}\left|K\left(s,l \right)-K\left(s,l' \right) \right| \geq 2C_1 \alpha (\log n )^2 \right)& \leq \sum_{l=1}^n \mathds{P}\left(\sup_{\substack{0\leq s\leq 1 \\ l'\in[l-1,l] }}\left|K\left(s,l \right)-K\left(s,l' \right) \right| \geq 2C_1 \alpha (\log n )^2 \right)\\
&\leq n \mathds{P}\left(\sup_{\substack{0\leq s\leq 1 \\ 0\leq t \leq 1 }}\left|K\left(s,t \right) \right| \geq 2C_1 \alpha (\log n )^2 \right)\\
&\leq n \mathds{P}\left(2\sup_{\substack{0\leq s\leq 1 \\ 0\leq t \leq 1 }}\left|W\left(s,t \right) \right| \geq 2C_1 \alpha (\log n )^2 \right),
\end{align*}
where $W$ is a two-dimensional Brownian sheet (using the facts that $(s,t)\mapsto K(s,l)-K(s,l-t)$ and $(s,t)\mapsto W(s,t)-sW(1,t)$ are Kiefer processes on $[0,1]^2$). From \cite[Theorem 3]{goodman1976distribution}, and if $\Phi$ is the standard normal cumulative distribution function, 

\begin{equation}\label{thm3}
\mathds{P}\left(\sup_{\substack{0\leq s\leq 1 \\ 0\leq t \leq 1 }}\left|W\left(s,t \right) \right| \geq C_1 \alpha (\log n )^2 \right)\leq 4 \Phi(-C_1 \alpha (\log n )^2).
\end{equation} 
If $C_1$ is large enough, which can be assumed without loss of generality, for all $\alpha\geq 1$ and all $n\geq 2$, $4 \Phi(-C_1 \alpha (\log n )^2)\leq {1}/{n^{1+\alpha}}$. Therefore,
\begin{equation}\nonumber
\mathds{P}\left(\sup_{\substack{0\leq s\leq 1 \\ l\in\ens{n}}} \sup_{l'\in[l-1,l]}\left|K\left(s,l \right)-K\left(s,l' \right) \right| \geq 2C_1 \alpha (\log n )^2 \right)\leq \frac{n}{n^{1+\alpha}}=\frac{1}{n^\alpha}, 
\end{equation}
and using \eqref{disc},
\begin{equation}\label{KMTsupbridge}
	\mathds{P}\left(\sup_{\substack{0\leq s\leq 1 \\ 0\leq t \leq 1}} \left|\frac{1}{\sqrt{n}} \sum_{k=1}^{\lfloor tn \rfloor} \left(\mathds{1}_{U_k \leq s}-s\right) -\frac{K\left(s,tn \right)}{\sqrt{n}} \right| \geq 4C_1\alpha\frac{(\log n)^2}{\sqrt{n}} \right)\leq \frac{2}{n^\alpha}.
\end{equation}

For $i\in\ens{n}$, let $X_i:=\min_{k\in\ens{m}} \{k: U_i\leq p_1+\dots+p_k\}$.  Clearly, the $X_i$ are  i.i.d random variables with values in $\ens{m}$ and probability mass function $p_1,\dots,p_m$. So,  with the notations above,
\begin{equation}\nonumber
\widetilde{B^n_i}(t)=\frac{N^{i}_{1,\lfloor tn \rfloor}-p_i \lfloor tn \rfloor}{\sigma_i\sqrt{n}}=\frac{\sum_{k=1}^{\lfloor tn \rfloor} \left(\mathds{1}_{U_k \leq p_1+\dots+p_i}-(p_1+\dots+p_i)-\mathds{1}_{U_k \leq p_1+\dots+p_{i-1}}+p_1+\dots+p_{i-1}\right)}{\sigma_i\sqrt{n}}.
\end{equation}
For $i\in\ens{m}$ and $t\in[0,1]$, then 
\begin{equation*}
B^n_i(t):=\frac{K\left(p_1+\dots+p_i,tn \right)-K\left(p_1+\dots+p_{i-1},tn \right)}{\sigma_i\sqrt{n}},
\end{equation*}
are Brownian motions with covariance matrix $\Sigma:=\mathrm{Cov}\left((\mathds{1}_{X_1=i}/\sigma_i)_{1\leq i\leq m}\right)$.  Note that \begin{equation}\nonumber
\sup_{\substack{i\in \ens{m} \\ 0\leq t\leq 1}} \left| \sigma_i \widetilde{B^n_i}(t)-\sigma_i B^n_i(t)\right|\leq 2\sup_{\substack{0\leq s\leq 1 \\ 0\leq t \leq 1}} \left|\frac{1}{\sqrt{n}} \sum_{k=1}^{\lfloor tn \rfloor} \left(\mathds{1}_{U_k \leq s}-s\right) -\frac{K\left(s,tn \right)}{\sqrt{n}} \right|, 
\end{equation}
and so from \eqref{KMTsupbridge} the following coupling inequality: 
\begin{equation}\nonumber
\mathds{P}\left(\sup_{\substack{i\in \ens{m} \\ 0\leq t\leq 1}} \left| \sigma_i \widetilde{B^n_i}(t)-\sigma_i B^n_i(t)\right| \geq 8 C_1 \frac{\alpha(\log n)^2}{\sqrt{n}} \right)\leq \frac{2}{n^{\alpha}},
\end{equation}
is valid and this gives the desired result (letting $C=8 C_1$).

\end{proof}

{\it From now on, our setting is the probability space 
	introduced in Lemma~\ref{lemKMTsup} with its notation.}

To start with, we address the case of uniformly distributed letters and study the rate of convergence, in Kolmogorov distance, for the cumulative shape of the RSK Young diagrams associated with the random word.   



Let $(R_k(n,m))_{1\le k \le n}$ be the shape of the RSK Young diagram associated with the random words $X_1\cdots X_n$ with uniformly distributed letters over $\{1, \dots, m\}$, and define for $1\leq k\leq m$, $V_k(n,m)=\sum_{l=1}^k R_l(n,m)$ (so that, for example, $V_1(n,m)=R_1(n,m)=LI_n$). 
From \cite[Corollary 3.1]{HL}, for properly defined $I_{k,m}$ (keeping the notations of \cite{HL}), $(V_k(n,m))_{1\le k \le n}$ is such that:  
\begin{equation}\label{firstlimitlaw}
\left(\frac{V_{k}(n,m)-kn/m}{\sqrt{n/m}}\right)_{1\leq k\leq m}\xRightarrow[n\to \infty]{}\left(\max_{\mathbf{t}\in I_{k,m}}\sum_{j=1}^k\sum_{l=j}^{m-k+j}\sqrt{(m-1)/m}\left(B_l(t_{j,l})-B_l(t_{j,l-1})\right)\right)_{1\leq k\leq m},
\end{equation}
where the convergence is in distribution, and $B$ is a $m$-dimensional Brownian motion with covariance matrix having diagonal terms equal to $1$ and off-diagonal terms equal to $-1/(m-1)$, i.e., the covariance matrix $\Sigma$ of Lemma~\ref{lemKMTsup}.  Recall finally that the limiting law in \eqref{firstlimitlaw} is the spectra of a traceless $m\times m$ GUE matrix.

To simplify notations, let further denote $\left(\max_{\mathbf{t}\in I_{k,m}}\sum_{j=1}^k\sum_{l=j}^{m-k+j}\sqrt{(m-1)/m}\left(B^n_l(t_{j,l})-B^n_l(t_{j,l-1})\right)\right)_{1\leq k\leq m}$ by $(J_{1,m},\dots,J_{m,m})$ and $\left(({V_{k}(n,m)-kn/m})/{\sqrt{n/m}}\right)_{1\leq k\leq m}$ by $(T_{1,m},\dots,T_{m,m})$.

\begin{thm} 
For every $n,m\geq 2$ and $1\leq k \leq m$, 
$$\sup_{x\in\mathbb{R}}\left|\mathds{P}\left(T_{k,m}\geq x\right)-\mathds{P}\left(J_{k,m}\geq x\right)\right|\leq C(m)\frac{(\log n)^2}{\sqrt{n}},$$
where $C(m)$ is a constant only depending on $m$.
\end{thm}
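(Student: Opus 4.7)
The strategy is to realize both $T_{k,m}$ and $J_{k,m}$ on the coupled probability space of Lemma~\ref{lemKMTsup} as the maximum of the \emph{same} linear functional applied respectively to the empirical bridges $\widetilde{B^n_l}$ and to the Brownian motions $B^n_l$, deduce pointwise closeness from \eqref{KMTsup}, and then convert this into a Kolmogorov bound via an anti-concentration estimate for $J_{k,m}$.

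First I would extend the max-plus formula \eqref{exprlcin} from $k=1$ to all $1\le k\le m$ using the RSK representation of \cite[Corollary~3.1]{HL}: one has
$$V_k(n,m)=\max_{\mathbf{t}\in I^d_{k,m}}\sum_{j=1}^{k}\sum_{l=j}^{m-k+j}\bigl(N^{l}_{\lfloor t_{j,l}n\rfloor}-N^{l}_{\lfloor t_{j,l-1}n\rfloor}\bigr),$$
where $I^d_{k,m}$ is the natural discretization of $I_{k,m}$. Because the letters are uniform on $\ens{m}$, the deterministic drifts $(t_{j,l}-t_{j,l-1})n/m$ telescope over every admissible $\mathbf{t}$ to the constant $kn/m$, and after dividing by $\sqrt{n/m}$ one obtains
$$T_{k,m}=\sqrt{(m-1)/m}\,\max_{\mathbf{t}\in I^d_{k,m}}\sum_{j=1}^{k}\sum_{l=j}^{m-k+j}\bigl(\widetilde{B^n_l}(t_{j,l})-\widetilde{B^n_l}(t_{j,l-1})\bigr),$$
which is exactly the functional defining $J_{k,m}$ but with $B^n_l$ replaced by $\widetilde{B^n_l}$ and the maximum restricted to $I^d_{k,m}$. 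Since $\widetilde{B^n_l}$ is piecewise constant on the intervals $[j/n,(j+1)/n)$, the discrete and continuous maxima coincide, so $T_{k,m}$ and $J_{k,m}$ may be compared by straightforward pointwise arithmetic.

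Second, for any admissible $\mathbf{t}$ the functional is a sum of at most $k(m-k+1)\le m^2$ increment differences, each bounded in absolute value by $2\sup_{i,t}|\widetilde{B^n_i}(t)-B^n_i(t)|$. Applying Lemma~\ref{lemKMTsup} with $\alpha=1$ and using $\sigma_i=\sqrt{m-1}/m$ to pass from the normalized to the unnormalized increments yields
$$\mathds{P}\!\left(|T_{k,m}-J_{k,m}|\ge C_1(m)\,(\log n)^2/\sqrt{n}\right)\le 2/n$$
for an explicit $C_1(m)$ depending only on $m$. To turn this into a uniform bound in $x$, I would use an anti-concentration estimate of the form $\sup_{x\in\mathbb{R}}\mathds{P}(J_{k,m}\in[x,x+\varepsilon])\le D(m)\varepsilon$, which follows from the identification of $(J_{1,m},\dots,J_{m,m})$ with the spectrum of a traceless $m\times m$ GUE matrix recalled after \eqref{firstlimitlaw}: the explicit joint density of GUE eigenvalues is smooth with super-exponential decay, so any fixed partial sum admits a bounded marginal density. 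The standard triangle-style estimate
$$\mathds{P}(T_{k,m}\ge x)\le \mathds{P}(J_{k,m}\ge x-\varepsilon)+2/n\le \mathds{P}(J_{k,m}\ge x)+D(m)\varepsilon+2/n,$$
and its symmetric counterpart, then give the theorem with $\varepsilon=C_1(m)(\log n)^2/\sqrt{n}$ and $C(m):=D(m)C_1(m)+2$.

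The representation and coupling steps are essentially routine once the correct extension of \eqref{exprlcin} is in hand; the main obstacle is producing the anti-concentration constant $D(m)$ \emph{uniformly in $x$ and $n$}. While the GUE connection makes such a bound plausible, one has to argue from the eigenvalue joint density (or from the regularity of partial sums of GUE eigenvalues) to extract a density estimate depending only on $m$, and this is the single delicate input needed to complete the proof.
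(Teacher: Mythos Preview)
Your proposal is correct and follows essentially the same route as the paper: the paper invokes \cite[(3.7)]{HL} for the representation of $T_{k,m}$ that you rederive, applies the coupling of Lemma~\ref{lemKMTsup} to obtain exactly your deviation bound (with the explicit constant $2k(m-k+1)\sqrt{m}$), and then uses the same triangle argument together with a bound $D(m)$ on the density of $J_{k,m}$. The ``delicate input'' you flag is disposed of in the paper by citing the explicit GUE eigenvalue density bound \cite[(3.5)]{HT}, so your identification of that step as the only nontrivial external ingredient is accurate.
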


\begin{proof}

As shown in  \cite[(3.7)]{HL}, 
\begin{equation}
T_{k,m}=\max_{\mathbf{t}\in I_{k,m}}\sum_{j=1}^k\sum_{l=j}^{m-k+j}\sqrt{(m-1)/m}\left(\widetilde{B^n_l}(t_{j,l})-\widetilde{B^n_l}(t_{j,l-1})\right), 
\end{equation}
so applying \eqref{KMTsup}, since $\left|T_{k,m}-J_{k,m}\right|\leq 2k(m-k+1)\sqrt{m} \sup_{\substack{i \in\ens{m} \\ 0\leq t \leq 1}} \left| \sigma_i \widetilde{B^n_i}(t)-\sigma_i B^n_i(t)\right|$, 
\begin{equation}\label{boundtkm}
\mathds{P}\left(\left|T_{k,m}-J_{k,m}\right|\geq 2k(m-k+1)\sqrt{m}C\alpha\frac{(\log n)^2}{\sqrt{n}}\right)\leq \frac{2}{n^{\alpha}}.
\end{equation}

Recall next that with $\Theta_k:\mathbb{R}^k \to \mathbb{R}^k $ given by $(\Theta_k(x))_j = \sum_{i=1}^jx_i$, $1 \le j \le k$, it follows that  $\Theta_m^{-1}(J_{1,m},\dots,J_{m,m})$ has the same distribution as the ordered spectrum of an $m\times m$ traceless GUE matrix. Moreover, by a bound on the joint density of the eigenvalues (see (3.5) in \cite{HT}), there exists $D(m)>0$ bounding the supremum of the density of $J_{k,m}$ (for $1\leq k\leq m$). 

\noindent 
So, for any $x\in\mathbb{R}$, 
\begin{align*}
\left|\mathds{P}\left(T_{k,m}\geq x\right)-\mathds{P}\left(J_{k,m}\geq x\right)\right| & \leq \mathds{P}\left(\left|T_{k,m}-J_{k,m}\right|\geq 2k(m-k+1)\sqrt{m}C\alpha\frac{(\log n)^2}{\sqrt{n}}\right)\\ &\qquad \qquad\qquad + \mathds{P}\left(\left|J_{k,m}-x\right| \leq 2k(m-k+1)\sqrt{m}C\alpha\frac{(\log n)^2}{\sqrt{n}}\right)\\
 & \leq \frac{2}{n^{\alpha}}+D(m)2k(m-k+1)\sqrt{m}C\alpha\frac{(\log n)^2}{\sqrt{n}},
\end{align*}
\noindent
so $\sup_{x\in\mathbb{R}}\left|\mathds{P}\left(T_{k,m}\geq x\right)-\mathds{P}\left(J_{k,m}\geq x\right)\right|$ is upper bounded as stated.

\end{proof} 

As a corollary, we can also study the speed of convergence of $T_{.,m}=(T_{k,m})_{1\leq k\leq m}$ towards $J_{.,m}=(J_{k,m})_{1\leq k\leq m}$, in the Kolmogorov distance, rather than coordinate by coordinate. Just as before, we have 

\begin{equation}
\mathds{P}\left(\left\|T_{.,m}-J_{.,m}\right\|_{\infty}:=\max_{1\leq k \leq m}|T_{.,m}-J_{.,m}|\geq 2m^{5/2}C\alpha\frac{(\log n)^2}{\sqrt{n}}\right)\leq \frac{2}{n^{\alpha}},
\end{equation}

\noindent
and

\begin{equation}
\mathds{P}\left(\max_{1\leq k \leq m}\left|J_{k,m}-x\right| \leq 2m^{5/2}C\alpha\frac{(\log n)^2}{\sqrt{n}}\right)\leq mD(m)2m^{5/2}C\alpha\frac{(\log n)^2}{\sqrt{n}},
\end{equation}

\noindent
so for any $x_1,\dots,x_m\in\mathbb{R}$, 

\begin{eqnarray*}
\left|\mathds{P}\left(\max_{1\leq k\leq m} T_{k,m}\geq x_k\right)-\mathds{P}\left(\max_{1\leq k \leq m} J_{k,m}\geq x_k\right)\right|  \leq & \!\!\! \!\!\! \!\!\! \!\!\! \!\!\! \!\!\! \!\!\! \!\!\!\mathds{P}\left(\left\|T_{.,m}-J_{.,m}\right\|_{\infty}\geq 2m^{5/2}C\alpha\frac{(\log n)^2}{\sqrt{n}}\right)\\ 
&  + \mathds{P}\left(\max_{1\leq k\leq m}\left|J_{k,m}-x\right| \leq 2m^{5/2}C\alpha\frac{(\log n)^2}{\sqrt{n}}\right)\\
& \!\!\! \!\!\! \!\!\! \!\!\! \!\!\! \!\!\! \!\!\! \!\!\! \!\!\! \!\!\! \!\!\! \!\!\! \le \frac{2}{n^{\alpha}} + mD(m)2m^{5/2}C\alpha\frac{(\log n)^2}{\sqrt{n}}.
\end{eqnarray*}

As in \cite{BH2}, for $m$ no longer fixed, let us consider conditions on the sequences  ($m(n))_{n\geq 1}$ (writing just $m$ for $m(n)$) under which for all $\varepsilon>0$,   

\begin{equation}\label{limitgoal}
\mathds{P}\left(\left| \left(T_{1,m}-2\sqrt{m}\right)m^{1/6}-\left(J_{1,m}-2\sqrt{m}\right)m^{1/6}\right|\geq \varepsilon\right) \xrightarrow[n\to \infty]{}  0.
\end{equation}

Then, since $\left(J_{1,m}-2\sqrt{m}\right)m^{1/6}$ converges in distribution to the Tracy--Widom distribution $F_{TW}$, this implies that $\left(T_{1,m}-2\sqrt{m}\right)m^{1/6}$, that is, $\left(\left(LI_n-n/m\right)/\sqrt{n/m}-2\sqrt{m}\right)m^{1/6}$, converges to $F_{TW}$ as well. 

Applying \eqref{boundtkm} gives 
\begin{equation}
\mathds{P}\left(\left| \left(T_{1,m}-2\sqrt{m}\right)m^{1/6}-\left(J_{1,m}-2\sqrt{m}\right)m^{1/6}\right|\geq Cm^{1/6}2k(m-k+1)\sqrt{m}\frac{\alpha(\log n)^2}{\sqrt{n}}\right)\leq \frac{2}{n^{\alpha}}, 
\end{equation}
so that if $m^{1/6}2k(m-k+1)\sqrt{m}C\alpha{(\log n)^2}/{\sqrt{n}}$ converges to zero, that is, if $m=o((n/(\log n)^4)^{3/10})$, then  \eqref{limitgoal} follows, and therefore the convergence in distribution of the properly centered and scaled $LI_n$ to the Tracy-Widom distribution also follows.


Beyond the uniform case, in order to evaluate the rate of convergence to the limiting law for arbitrary distributions, we first need to control how close $\max_{\substack {\lambda\in \Lambda_d}}Z_n(\lambda)$ is to $\max_{\substack {\lambda\in \Lambda^\prime_d}}Z_n(\lambda)$ where again,$$\Lambda^\prime_d:=\{\left({j_1}/{n},\dots,{j_m}/{n}\right):j_1+\dots+j_m=n\text{ and }p_i\neq p_{\max}\implies j_i=0\}.$$

\begin{lem}\label{lemnonuni}
Let $n,m\geq 2, \alpha\geq 1, a=6+3\alpha$ and $\Delta=p_{\max}-p_{2nd}$, where $p_{2nd}$ is the second highest of the $p_i$'s. Let $a\log n\leq 2\sqrt{n}\Delta$. Then,
$$\mathds{P}\left(\left| \frac{\max_{\substack {\lambda\in \Lambda_d}}Z_n(\lambda)}{\sqrt{p_{\max}}}-\frac{\max_{\substack {\lambda\in \Lambda'_d}}Z_n(\lambda)}{\sqrt{p_{\max}}}\right|> \frac{(a\log n)^2}{4\Delta \sqrt{n p_{\max}}}+\frac{am\log n}{\sqrt{n p_{\max}}}\right)\leq \frac{2m}{n^\alpha}.$$
\end{lem}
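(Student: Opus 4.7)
The plan is to build, for each $\lambda\in\Lambda_d$, a companion $\mu_\lambda\in\Lambda'_d$ and to compare $Z_n(\lambda)$ with $Z_n(\mu_\lambda)$. Because $\Lambda'_d\subset\Lambda_d$, the absolute value in the statement is just the nonnegative gap $\max_{\Lambda_d}Z_n-\max_{\Lambda'_d}Z_n$, and it suffices to establish $Z_n(\lambda)-Z_n(\mu_\lambda)\le\mathrm{RHS}$ uniformly in $\lambda$. I would fix $i^\star\in S:=\{i:p_i=p_{\max}\}$ and, for $\lambda\in\Lambda_d$ with $s:=\sum_{i\in S^c}\lambda_i$, set $(\mu_\lambda)_i=0$ for $i\in S^c$, $(\mu_\lambda)_{i^\star}=\lambda_{i^\star}+s$, and $(\mu_\lambda)_i=\lambda_i$ for the other indices. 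The partial sums $b_i^\lambda:=\lambda_1+\cdots+\lambda_i$ then satisfy $|b_i^\lambda-b_i^{\mu_\lambda}|\le s$ for every $i$, and one may decompose
\[
Z_n(\lambda)-Z_n(\mu_\lambda)=-\sqrt{n}\sum_{i\in S^c}(p_{\max}-p_i)\lambda_i+\bigl[Y_n(\lambda)-Y_n(\mu_\lambda)\bigr]\le-\sqrt{n}\Delta s+|Y_n(\lambda)-Y_n(\mu_\lambda)|,
\]
where $Y_n(\nu):=\sum_{i=1}^m\widetilde{V^n_i}(\nu)$. The drift part provides a linear penalty of size $\sqrt{n}\Delta s$, and only the Brownian remainder needs to be controlled.

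For this remainder I would apply Bernstein's inequality directly to the centered Bernoulli sums defining the processes: writing $\sigma_i\widetilde{B^n_i}(j/n)=\sum_{k=1}^j(\mathds{1}_{X_k=i}-p_i)/\sqrt{n}$, for any fixed triple $(i,j_1,j_2)$ with $|j_1-j_2|\le ns$ one has
\[
\mathds{P}\Bigl(\bigl|\sigma_i\widetilde{B^n_i}(j_1/n)-\sigma_i\widetilde{B^n_i}(j_2/n)\bigr|\ge\sigma_i\sqrt{2s\,a\log n}+\tfrac{2a\log n}{3\sqrt{n}}\Bigr)\le\frac{2}{n^a}.
\]
With the choice $a=6+3\alpha$, a union bound over the at most $m(n+1)^2$ relevant triples gives $2m(n+1)^2/n^a\le 2m/n^\alpha$, so all such bounds hold simultaneously on an event of probability $\ge 1-2m/n^\alpha$. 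On this event, inserting these estimates into the telescoping identity
\[
Y_n(\nu)=\sigma_m\widetilde{B^n_m}(1)+\sum_{i=1}^{m-1}\bigl[\sigma_i\widetilde{B^n_i}(b_i^\nu)-\sigma_{i+1}\widetilde{B^n_{i+1}}(b_i^\nu)\bigr],
\]
and using $\sigma_i\le\sqrt{p_{\max}}$ together with $\sum_i\sigma_i^2\le 1$, one obtains an estimate of the form
\[
|Y_n(\lambda)-Y_n(\mu_\lambda)|\le C_1\sqrt{p_{\max}\,s\,a\log n}+C_2\,\frac{m\,a\log n}{\sqrt n},
\]
the first (Gaussian-regime) term being essentially $m$-free and the second (Poisson-regime) term collecting the $m$-dependence from the $(m-1)$ telescoped increments.

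Combining with the drift penalty and optimizing in $s\ge 0$ through the elementary identity $\max_{x\ge 0}(-Ax+B\sqrt{x})=B^2/(4A)$ with $A=\sqrt{n}\Delta$ and $B$ proportional to $\sqrt{p_{\max}\,a\log n}$ produces a Gaussian contribution of order $p_{\max}(a\log n)^2/(\sqrt{n}\Delta)$, while the constant-in-$s$ Poisson tail contributes $ma\log n/\sqrt{n}$. After dividing through by $\sqrt{p_{\max}}$ these are exactly the two summands $(a\log n)^2/(4\Delta\sqrt{np_{\max}})$ and $am\log n/\sqrt{np_{\max}}$ appearing on the right-hand side. The hypothesis $a\log n\le 2\sqrt{n}\Delta$ ensures the optimizing $s^\star$ lies in $[0,1]$, so the extremum is genuinely attained inside the feasible set. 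The most delicate step is the $m$-bookkeeping: applying the triangle inequality naively to each of the $(m-1)$ telescoped increments introduces a spurious $\sqrt{m}$ (or worse) factor into the first summand. The key is to pool the Gaussian-regime contributions via Cauchy--Schwarz together with $\sum_i\sigma_i^2\le 1$, so that all $m$-dependence is confined to the $am\log n/\sqrt{np_{\max}}$ summand arising from the Poisson regime.
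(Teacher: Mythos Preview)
Your overall plan---construct $\mu_\lambda\in\Lambda'_d$, isolate the drift gain $\sqrt{n}\Delta s$, control the fluctuation part by Bernstein with a union bound over $O(mn^2)$ triples, then optimize in $s$---matches the paper's architecture. The gap is in the step you yourself flag as ``most delicate'': the claim that the Gaussian-regime contribution from the telescoped identity can be pooled via Cauchy--Schwarz into $C_1\sqrt{p_{\max}\,s\,a\log n}$ with $C_1$ universal. Your telescoping expresses $Y_n(\lambda)-Y_n(\mu_\lambda)$ as a sum of $2(m-1)$ increments of the processes $\sigma_i\widetilde{B^n_i}$, each over an interval of length $|b_i^\lambda-b_i^{\mu_\lambda}|\le s$. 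Bernstein then gives, for each, a Gaussian term $\sigma_i\sqrt{2s\,a\log n}$, and summing yields $\bigl(2\sum_i\sigma_i\bigr)\sqrt{2s\,a\log n}$. Cauchy--Schwarz with $\sum_i\sigma_i^2\le 1$ only produces $\sum_i\sigma_i\le\sqrt{m}$; there is no mechanism here to reduce the coefficient to $O(\sqrt{p_{\max}})$ or $O(1)$. (For instance, with $p_1=2/m$ and $p_j=(1-2/m)/(m-1)$ for $j\ge 2$, one has $\sum_i\sigma_i\asymp\sqrt{m}$ while $\sqrt{p_{\max}}\asymp 1/\sqrt{m}$.) After optimizing in $s$ this leaves an extra factor of $m$ in the first summand, which is neither the stated $(a\log n)^2/(4\Delta\sqrt{np_{\max}})$ nor absorbable into the second summand when $\Delta$ is small.

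The paper avoids this by \emph{not} telescoping through the partial sums $b_i$. Instead it bounds each $\widetilde{V^n_i}(\lambda)$ directly as an increment of length $\lambda_i$, obtaining on the good event $|\widetilde{V^n_i}(\lambda)|\le(\sigma_i\sqrt{\lambda_i}+1/\sqrt{n})\,a\log n$. Comparing $Z_n(\lambda)$ with $Z_n(\lambda')$ then produces a Gaussian contribution of the form $\sigma_{i^\star}\sqrt{s}+\sum_{j\in S^c}\sigma_j\sqrt{\lambda_j}$, and Cauchy--Schwarz applied to the second sum with the constraint $\sum_{j\in S^c}\lambda_j=s$ collapses it to $\le\sqrt{s}$, genuinely free of $m$. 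The crucial difference is that in the paper's decomposition the increment lengths are the $\lambda_j$'s themselves (which sum to $s$ over $S^c$), whereas in your telescoping each of $\sim m$ increments has length up to $s$, with no summability constraint to exploit. To repair your argument you should abandon the $b_i$-telescoping and instead bound the change in each $\widetilde{V^n_i}$ in terms of $\sqrt{\lambda_i}$, as the paper does.
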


\begin{proof}

Our analysis of this result rests upon estimating the variations of $\widetilde{B^n_i}$. To do so, let $A_n$ be the event: 
\begin{equation*}
\forall i\in\ens{m}, \forall j \in \{0,\dots,n-1\}, \forall \ell\in\{1,\dots,n-j\}, \left\vert \frac{N^{i}_{j+\ell}-N^i_j-p^X_i \ell}{\sqrt{n}}\right\vert\leq \frac{\sigma_i \sqrt{\ell}+1}{\sqrt{n}}a\log{n}.
\end{equation*}
By Bernstein's  inequality:
\begin{equation}\label{azuma}
1-\mathds{P}\left(A_n\right)\leq 2m\frac{n(n+1)}{2} \exp\left(-\frac{\frac{1}{2}(\sigma_i \sqrt{\ell}+1)^2 (a\log n)^2}{\ell \sigma^2_i+\frac{1}{3}(\sigma_i \sqrt{\ell}+1)a\log{n}}\right)
\end{equation}
and since
\begin{equation}
\frac{\frac{1}{2}(\sigma_i \sqrt{\ell}+1)^2 (a\log n)^2}{\ell \sigma^2_i+\frac{1}{3}(\sigma_i \sqrt{\ell}+1)a\log{n}}\geq 
\frac{\frac{1}{2}(\sigma_i \sqrt{\ell}+1)^2 (a\log n)^2}{\frac{2}{3}(\sigma_i \sqrt{\ell}+1)a\log{n}+\frac{1}{3}(\sigma_i \sqrt{\ell}+1)a\log{n}}\wedge
\frac{\frac{1}{2}(\sigma_i \sqrt{\ell}+1)^2 (a\log n)^2}{\ell \sigma^2_i+\frac{1}{2}\ell \sigma^2_i}\geq \frac{a}{3}\log{n},
\end{equation}
it follows that
\begin{equation*}
1-\mathds{P}\left(A_n\right) \leq 2mn^{2-\frac{a}{3}}\leq \frac{2m}{n^{\alpha}}.
\end{equation*}
If $A_n$ occurs, then for all $\lambda\in \Lambda_d$ and $i\in \ens{m}$, 
\begin{equation}\label{boundV}
\widetilde{V^{n}_i}(\lambda)\leq a\log{n}\left(\sigma_i \sqrt{\lambda_i}+\frac{1}{\sqrt{n}}\right).
\end{equation}
Let $A_n$ occur. Let $\lambda\in \Lambda_d$. Let $s=\sum_{j: p_j\neq p_{\max}} \lambda_j$, let $i\in\ens{m}$ be such that $p_i=p_{\max}$ and let $\lambda'$ be defined as: $\lambda'_j=0$ for all $j$ such that $p_j\neq p_{\max}$, $\lambda'_i=\lambda_i+s$, and $\lambda'_j=\lambda_j$ elsewhere. So $\lambda^\prime\in\Lambda^\prime_d$ and from \eqref{boundV},
\begin{equation*}
Z_n(\lambda')\geq Z_n(\lambda)+\sqrt{n}s\Delta - \left(\sigma_i\sqrt{s}+\frac{1}{\sqrt{n}}\right)a\log n - \sum_{j:p_j\neq p_{\max}}\left(\sigma_j\sqrt{\lambda_j}+\frac{1}{\sqrt{n}}\right)a\log n,
\end{equation*}
which leads by the Cauchy-Schwarz inequality to
\begin{equation*}
Z_n(\lambda')\geq Z_n(\lambda)+\sqrt{n}s\Delta- \sqrt{s}a\log n-\frac{am\log n}{\sqrt{n}}.
\end{equation*}
Hence, since $a\log n\leq 2\sqrt{n}\Delta$,
\begin{equation*}
Z_n(\lambda')\geq Z_n(\lambda)-\frac{(a\log n)^2}{4\Delta \sqrt{n}}-\frac{am\log n}{\sqrt{n}},
\end{equation*}
and finally
\begin{equation}
0\leq \max_{\substack {\lambda\in \Lambda_d}}Z_n(\lambda)-\max_{\substack {\lambda\in \Lambda'_d}}Z_n(\lambda) \leq \frac{(a\log n)^2}{4\Delta \sqrt{n}}+\frac{am\log n}{\sqrt{n}}.
\end{equation}
Therefore,
\begin{equation}
\mathds{P}\left(\left| \frac{\max_{\substack {\lambda\in \Lambda_d}}Z_n(\lambda)}{\sqrt{p_{\max}}}-\frac{\max_{\substack {\lambda\in \Lambda'_d}}Z_n(\lambda)}{\sqrt{p_{\max}}}\right|> \frac{(a\log n)^2}{4\Delta \sqrt{n p_{\max}}}+\frac{am\log n}{\sqrt{n p_{\max}}}\right)\leq 1-\mathds{P}\left(A_n\right)\leq \frac{2m}{n^{\alpha}}.
\end{equation}
\end{proof}

We can now deduce our theorem, where below $J_k$ is defined as in \cite[Theorem 4.1]{HT} ($J_k$ has same law as $J_{1,k}+\sqrt{(1-kp_{\max})/k}Z$, where $Z\sim \mathcal{N}(0,1)$ is independent from the other variables)
\begin{thm}\label{thm:newthm41}
Let $n,m\geq 2$ and let $k$ be the multiplicity of $p_{\max}$, then,
\begin{equation}\label{newthm41}
\left|\mathds{P}\left(\frac{LI_n-np_{\max}}{\sqrt{n p_{\max}}}\geq x\right)-\mathds{P}\left(J_k\geq x\right)\right| \leq 2 D(k,p_{\max})\frac{(\log n)^2}{\sqrt{n p_{\max}}}\left(\frac{21}{\Delta}+C_2 m\right),
\end{equation}
where $C_2$ is a universal constant, where  $D(1,p_{\max}):=1/\sqrt{2\pi(1-p_{\max})}$, and where for $k\ge 2$,  $D(k,p_{\max}):=\min\left\{\sqrt{{k}/{2\pi(1-kp_{\max})}}, k^{3k}(2\pi e^2)^{k/2}\sqrt{{e}/{\pi}} \right\}$.
\end{thm}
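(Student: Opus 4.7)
The plan chains the two preceding lemmas together with the density bound for $J_k$ in a standard coupling-plus-density argument. First, I would apply Lemma \ref{lemnonuni} with $\alpha=1$ (hence $a=9$, so $a^2/4=81/4<21$): on an event of probability at least $1-2m/n$, the quantities $\max_{\Lambda_d}Z_n/\sqrt{p_{\max}}$ and $\max_{\Lambda'_d}Z_n/\sqrt{p_{\max}}$ differ by at most $21(\log n)^2/(\Delta\sqrt{n p_{\max}})+9m\log n/\sqrt{n p_{\max}}$, which already explains the term $21/\Delta$. On $\Lambda'_d$ the drift contribution $\sqrt{n}(p_i-p_{\max})\lambda_i$ vanishes and $\widetilde{V^{n}_i}(\lambda)=0$ whenever $p_i\ne p_{\max}$ (since such $\lambda_i$ are zero), so $Z_n(\lambda)=\sum_{i:p_i=p_{\max}}\widetilde{V^{n}_i}(\lambda)$ is a sum of $k\leq m$ terms; one application of Lemma \ref{lemKMTsup} together with the triangle inequality replaces each $\widetilde{B^n_i}$ by the coupled Brownian motion $B^n_i$ at a uniform total cost of order $k(\log n)^2/\sqrt{n}$, which after division by $\sqrt{p_{\max}}$ accounts for the $C_2 m (\log n)^2/\sqrt{n p_{\max}}$ contribution.

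Next, I would identify $\max_{\Lambda'}\sum_{i:p_i=p_{\max}} V^n_i(\lambda)/\sqrt{p_{\max}}$ — taken over the continuous sub-simplex $\Lambda'$ analogous to $\Lambda'_d$ — with a random variable $\widetilde J_k$ distributed as $J_k$. The passage from $\Lambda'_d$ to $\Lambda'$ only introduces an $O(\sqrt{\log n/n})$ discretization error from the modulus of continuity of Brownian motion, dominated by the previous terms. The distributional identification rests on the fact that the covariance of $(B^n_i)_{i:p_i=p_{\max}}$ (diagonal $1$, off-diagonal $-p_{\max}/(1-p_{\max})$) decomposes as a scaled $k\times k$ traceless-GUE Brownian covariance (diagonal $1$, off-diagonal $-1/(k-1)$) plus an independent common Gaussian component of variance $(1-kp_{\max})/(k(1-p_{\max}))$; after taking the maximum over $\Lambda'$ this produces the law $J_{1,k}+\sqrt{(1-kp_{\max})/k}\,Z$ recorded in \cite[Theorem 4.1]{HT}.

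Combining the two couplings gives, on an event of probability at least $1-4m/n$, the estimate $|T-\widetilde J_k|\leq \varepsilon_n$, where $T:=(LI_n-np_{\max})/\sqrt{np_{\max}}$ and $\varepsilon_n:=(\log n)^2(21/\Delta+C_2 m)/\sqrt{n p_{\max}}$. The usual shift argument
$$|\mathds{P}(T\geq x)-\mathds{P}(J_k\geq x)| \leq \mathds{P}(|T-\widetilde J_k|>\varepsilon_n)+\sup_{y\in\mathbb{R}}\mathds{P}(|J_k-y|\leq\varepsilon_n)\leq \frac{4m}{n}+2D(k,p_{\max})\varepsilon_n$$
then concludes, with the tail $4m/n$ absorbed into the density term by inflating $C_2$ (or, equivalently, by applying Lemma \ref{lemKMTsup} with a slightly larger $\alpha$ while keeping $\alpha=1$ in Lemma \ref{lemnonuni}). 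The main obstacle is the bookkeeping of constants: $\alpha=1$ in Lemma \ref{lemnonuni} is forced in order to deliver exactly $21/\Delta$, and the explicit two-sided form of $D(k,p_{\max})$ requires verifying both the spectral density bound of \cite{HT} (yielding $k^{3k}(2\pi e^2)^{k/2}\sqrt{e/\pi}$) and the Gaussian-convolution bound (yielding $\sqrt{k/(2\pi(1-kp_{\max}))}$), with $D$ being the smaller of the two.
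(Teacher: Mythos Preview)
Your approach is essentially the paper's: chain Lemma~\ref{lemnonuni} (with $\alpha=1$, $a=9$) and Lemma~\ref{lemKMTsup}, identify the Brownian maximum over $\Lambda'$ with $J_k$ via \cite{HT}, and finish with the density bound $D(k,p_{\max})$ and the absorption $4m/n\le C'D(k,p_{\max})m(\log n)^2/\sqrt{np_{\max}}$ using $D(k,p_{\max})\ge 1/\sqrt{2\pi}$.

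Two small points. First, your passage from $\Lambda'_d$ to $\Lambda'$ via a Brownian modulus-of-continuity estimate works, but the paper avoids this step entirely: since $\widetilde{B^n_i}(t)$ depends only on $\lfloor tn\rfloor$, one has exactly $\sup_{\Lambda'}Z_n=\max_{\Lambda'_d}Z_n$, so the KMT comparison can be made directly on the continuous simplex $\Lambda'$ before any discretization issue arises. Second, you do not address the regime where the hypothesis $a\log n\le 2\sqrt n\,\Delta$ of Lemma~\ref{lemnonuni} fails; the paper disposes of this by observing that then the right-hand side of \eqref{newthm41} already exceeds $1$ (since $21(\log n)^2/(\Delta\sqrt{np_{\max}})>1$ and $2D(k,p_{\max})\ge\sqrt{2/\pi}$), so the inequality is vacuous. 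You should include this one-line check.
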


\begin{proof}

We apply Lemma~\ref{lemKMTsup}, and for $\lambda\in \Lambda^\prime:=\{\lambda\in \Lambda: p_i\neq p_{\max}\implies \lambda_i=0\}$, let $Z'_n(\lambda)$ be defined as $Z_n(\lambda)$ but with $B^n_i$ instead of $\widetilde{B^n_i}$. For any $\lambda\in \Lambda'$, $\left|Z_n(\lambda)-Z'_n(\lambda)\right|\leq 2 m\sup_{\substack{i \in\ens{m} \\ 0\leq t \leq 1}} \left| \sigma_i \widetilde{B^n_i}(t)-\sigma_i B^n_i(t)\right|$, hence 
\begin{equation}\label{bound2}
\mathds{P}\left(\left| \sup_{\substack {\lambda\in \Lambda'}}Z_n(\lambda)-\sup_{\substack {\lambda\in \Lambda'}}Z'_n(\lambda)\right|\geq \frac{2C\alpha m(\log n)^2}{\sqrt{n}}\right)\leq \frac{2}{n^\alpha}.
\end{equation}
For all $\lambda\in\Lambda'$, $Z_n(\lambda)=\sum_{i=1}^m \widetilde{V^n_i}(\lambda)$, so $\sup_{\substack {\lambda\in \Lambda'}}Z_n(\lambda)=\max_{\substack {\lambda\in \Lambda'_d}}Z_n(\lambda)$. So

\begin{equation}
\mathds{P}\left(\left| \max_{\substack {\lambda\in \Lambda'_d}}Z_n(\lambda)-\max_{\substack {\lambda\in \Lambda'}}Z'_n(\lambda)\right|\geq \frac{2C\alpha m(\log n)^2}{\sqrt{n}}\right)\leq \frac{2}{n^\alpha}.
\end{equation}

\noindent
For any $x\in\mathbb{R}$, $\alpha\geq 1$, and $a=6+3\alpha$,

\begin{align*}
\resizebox{0.4\hsize}{!}{$\left|\mathds{P}\left(\frac{\max_{\substack {\lambda\in \Lambda_d}}Z_n(\lambda)}{\sqrt{p_{\max}}}\geq x\right)-\mathds{P}\left(\frac{\max_{\substack {\lambda\in \Lambda'}}Z'_n(\lambda)}{\sqrt{p_{\max}}}\geq x\right)\right|$} &\leq \mathds{P}\left(\left| \frac{\max_{\substack {\lambda_d \in \Lambda}}Z_n(\lambda)}{\sqrt{p_{\max}}}-\frac{\max_{\substack {\lambda\in \Lambda'_d }}Z_n(\lambda)}{\sqrt{p_{\max}}}\right|> \frac{(a\log n)^2}{4\Delta \sqrt{n p_{\max}}}+\frac{am\log n}{\sqrt{n p_{\max}}}\right)\\
&+\mathds{P}\left(\left| \frac{\max_{\substack {\lambda\in \Lambda'_d}}Z_n(\lambda)}{\sqrt{p_{\max}}}-\frac{\max_{\substack {\lambda\in \Lambda'}}Z'_n(\lambda)}{\sqrt{p_{\max}}}\right| >\frac{2C\alpha m(\log n)^2}{\sqrt{n p_{\max}}} \right)\\
& + \mathds{P}\left(\left|\frac{\max_{\substack {\lambda\in \Lambda'}}Z'_n(\lambda)}{\sqrt{p_{\max}}}-x\right|\leq \frac{(a\log n)^2}{4\Delta \sqrt{n p_{\max}}}+\frac{am\log n}{\sqrt{n p_{\max}}}+\frac{2C\alpha m(\log n)^2}{\sqrt{n p_{\max}}} \right).
\end{align*}

Now, with the notations of \cite[Theorem 4.1]{HT}, we see that $\max_{\substack {\lambda\in \Lambda'}}Z^\prime_n(\lambda)/\sqrt{p_{\max}}$ has the same law as $J_k$. Indeed, one can check that in \cite{HT}, $\tilde{B}$ is an $(m-1)$-dimensional Brownian motion with the same covariance  as \linebreak $\left(\!\left(\sigma_r B^n_r - \sigma_{r+1} B^n_{r+1}\right)\!/\!\sqrt{p_r+p_{r+1}-(p_r-p_{r+1})^2}\right)_{1\leq r\leq m}$, and rewriting $\tilde{B}$ that way gives  exactly ${\max_{\substack {\lambda\in \Lambda'}}Z'_n(\lambda)}/{\sqrt{p_{\max}}}$. Assuming $a\log n\leq 2\sqrt{n}\Delta$, from \ref{lemnonuni}, \cite[Proposition 3.1 (ii)]{HT} and the inequality $a=6+3\alpha\leq 9\alpha$, one gets

\begin{align}\label{newthm41alpha}
\left|\mathds{P}\left(\frac{LI_n-np_{\max}}{\sqrt{n p_{\max}}}\geq x\right)-\mathds{P}\left(J_k\geq x\right)\right| &\leq \frac{4m}{n^{\alpha}}+2D(k,p_{\max})\frac{(\log n)^2}{\sqrt{n p_{\max}}}\left(\frac{21\alpha^2}{\Delta}+\left(2C+\frac{9}{\log 2}\right)\alpha m\right),
\end{align}
where we refer to \cite[Proposition 3.1]{HT} for a proof that $D(k,p_{\max})$ is a bound on the infinity norm of the density of the limiting distribution $\max_{\substack {\lambda\in \Lambda'}}Z'_n(\lambda)$. Taking $\alpha=1$ and using  $D(k,p_{\max})\geq 1/\sqrt{2\pi}$, one gets, in particular, \eqref{newthm41}. Now if we no longer assume that $a\log n\leq 2\sqrt{n}\Delta$, the bound \eqref{newthm41alpha} (and therefore \eqref{newthm41})  above is still valid because its right-hand side is then greater than $1$.
\end{proof}

\section{Concluding remarks}
\begin{enumerate}[label=(\roman*)]
\item Theorem \ref{thm:newthm41} is slightly weaker than Theorem 4.1 in \cite{HT}, because of $(\log n)^2$ instead of $\log n$ and of the extra term ${21}/{\Delta}$.

The supplementary $\log n$ comes  from the different version of the KMT Theorem (strong embedding) that is used compared to the one in \cite{HT} (weak embedding for each coordinate).  Moreover, the proof that they can be built on a same probability space with the right covariance has been in question. More precisely, in \cite{HT}, the authors use the KMT Theorem to get for each $i\in \ens{m}$ a probability space with a version of the random walk $\widetilde{B^n_i}$ and a Brownian motion approximating it, but it is not clear how this implies that on some probability space, there are versions of $\widetilde{B^n_1},\dots,\widetilde{B^n_m}$ satisfying the additional covariance requirement $\widetilde{B^n_1}+\dots+\widetilde{B^n_m}=0$. However, using Kiefer's version of the KMT approximation, we have a probability space with $(U_i)_{1\leq i\leq n}$ uniform on $[0,1]$ and with $\widetilde{B^n_1},\dots,\widetilde{B^n_m}$ well defined on it (and satisfying the right covariance relation) as shown in the proof of Lemma \ref{KMTsup}.

Note that to the best of our knowledge, it is still an open problem to know whether or not the extra $\log n$ or $(\log n)^2$ factors in the strong embedding could be improved. Note, nevertheless, the rate $1/\sqrt{n}$ in the binary uniform case (see \cite[Theorem 5.1]{HT}).

The extra term ${21}/{\Delta}$ is needed because if one picks $\Delta=p_{\max} - p_{2nd}$ very small, there is no hope, for any fixed $m$, to have a sequence $(A^m_n)_{n\geq 2}$ converging to zero such that for any distribution $p_1,\dots,p_m$, any $n\geq 2$ and $x\in \mathbb{R}$,
\begin{equation}
\left|\mathds{P}\left(\frac{LI_n-np_{\max}}{\sqrt{n p_{\max}}}\geq x\right)-\mathds{P}\left(J_k\geq x\right)\right| \leq A^m_n.
\end{equation}
Indeed, assume it is the case for, say, $m=2$. Let us apply this bound to \begin{itemize}
\item Case a) $p_1=p_2=1/2$, and denote $\left(LI_n-np_{\max}\right)/\sqrt{n p_{\max}}$ by $Z_n$
\item Case b) $p_1=1/2+1/2^n,p_2=1/2-1/2^n$, and denote $\left(LI_n-np_{\max}\right)/\sqrt{n p_{\max}}$ by $Z'_n$.
\end{itemize}
We get 
\begin{equation*}
\mathds{P}\left(Z_n\geq x\right)-\mathds{P}\left(J_2\geq x\right)-\mathds{P}\left(Z'_n\geq x\right)+\mathds{P}\left(J_1\geq x\right) \xrightarrow[n\to \infty]{} 0.
\end{equation*}
There is a coupling such that with high probability, the letters $X_1,\dots, Y_n$ in case a) are all equal to the letters in case b), hence 
\begin{equation*}
\mathds{P}\left(Z_n\geq x\right)-\mathds{P}\left(Z'_n\geq x\right)\xrightarrow[n\to \infty]{} 0.
\end{equation*}
Furthermore, $J_1$ has distribution $\mathcal{N}\left(0,\frac{1}{4}-\frac{1}{2^{n+1}}\right)$ so 
\begin{equation*}
\mathds{P}\left(J_1\geq x\right) \xrightarrow[n\to \infty]{} \mathds{P}\left(Z_{1/4}\geq x\right),
\end{equation*}
where $Z_{1/4}\sim \mathcal{N}\left(0,1/4\right)$. Putting together these three limits, we get
\begin{equation*}
\mathds{P}\left(J_2\geq x\right)=\mathds{P}\left(Z_{1/4}\geq x\right),
\end{equation*}
which means that the limiting law for the uniform binary case is normal.  However, this is known to be false. So, $(A^m_n)_{n\ge 2}$ has to depend on the distribution.  We can actually find a contradiction as soon as $\Delta =o\left(1/\sqrt{n}\right)$. Our bound \eqref{newthm41}, on the other hand, is not exposed to this kind of cases because if the right-hand side is less than $1$ then $\Delta\geq {\log n}/{\sqrt{n}}$. So Theorem 4.1 in \cite{HT} only holds for $n$ large enough, and not for all $n\geq 2$.

\item As before, let us no longer consider $m,k$ and the distribution $p_1,\dots,p_m$ to be 
fixed. We assume that both $m$ and $k$ converge to infinity with $n$, and aim to find for which sequences we have for all $\varepsilon>0$,   
\begin{equation}\label{limitgoal2}
\mathds{P}\left(\left| \left(\frac{LI_n-np_{\max}}{\sqrt{np_{\max}}}-2\sqrt{k}\right)k^{1/6}-\left(J_{k}-2\sqrt{k}\right)k^{1/6}\right|\geq \varepsilon\right) \xrightarrow[n\to \infty]{}  0.
\end{equation}

Then, since $\left(J_{k}-2\sqrt{k}\right)k^{1/6}$ has the same distribution as $\left(J_{1,k}-2\sqrt{k}\right)k^{1/6}+k^{1/6}\sqrt{\left(1-kp_{\max}\right)/k}Z$, it will converge in distribution to $F_{TW}$, implying that $\left(\left(LI_n-np_{\max}\right)/\sqrt{n p_{\max}}-2\sqrt{k}\right)k^{1/6}$ converges to $F_{TW}$ as well. 

Applying the Lemma \ref{lemnonuni} and the bound \eqref{bound2} lead to

\begin{equation}
\mathds{P}\left(\left| \left(\frac{LI_n-np_{\max}}{\sqrt{n p_{\max}}}-2\sqrt{k}\right)k^{1/6}-\left(J_{k}-2\sqrt{k}\right)k^{1/6}\right|\geq \frac{(\log n)^2}{\sqrt{n p_{\max}}}\left(\frac{21}{\Delta}+C_2 m\right)\right)\leq \frac{4m}{n^{\alpha}}.
\end{equation}

Taking $m=o((n/(\log n)^4)^{3/10})$ (as previously done in the uniform case) and $m=o(\sqrt{n p_{\max}}\Delta/(\log n)^2)$, then \eqref{limitgoal2} follows. Note that in particular, when the conditions of Theorem 6 in \cite{BH2} are satisfied, the condition \linebreak$m=o(\sqrt{n p_{\max}}\Delta/\log n)$ follows. This is not enough to conclude, first, we need $(\log n)^2$ instead of $\log n$, and more importantly, the first condition $m=o((n/(\log n)^4)^{3/10})$ is missing. This is an omission in \cite{BH2}, because as it is, there is no condition on $m$ and this leads to a counterexample.  Indeed,  let $k=n^{1/9}, p_{\max}=n^{-2/3}, m=2^n+k, p_{2nd}=\left(1-n^{5/9}\right)/2^n$, it is easy to check that the conditions of Theorem 6 there, hold true but its conclusion does not. The two conditions on $m$ we give above do fix this issue.

\item To finish these notes, let us investigate the convergence of moments which, in particular, will provide a speed of convergence result in the distance $W_p$, $p\geq 1$, given by
$$W_p(X,Y):=\inf_{\substack{X' \text{ has same law as } X\\Y'  \text{ has same law as }Y}}\left(\mathds{E} \left|X'-Y'\right|^p\right)^{1/p}.$$
Let us start with the uniform case.  We have seen that
\begin{equation}
\mathds{P}\left(\left|T_{k,m}-J_{k,m}\right|\geq 2k(m-k+1)\sqrt{m}C\frac{\alpha(\log n)^2}{\sqrt{n}}\right)\leq \frac{2}{n^\alpha}.  
\end{equation}
In particular, taking $\alpha=p$, and setting 
$\varepsilon_{n,m,k}:=2k(m-k+1)\sqrt{m}C{p(\log n)^2}/{\sqrt{n}}$, we have

\begin{align*}
\mathds{E} \left|T_{k,m}-J_{k,m}\right|^p & \leq \mathds{E}\left( \left|T_{k,m}-J_{k,m}\right|^p\mathds{1}_{\left|T_{k,m}-J_{k,m}\right|\leq \varepsilon(n,m,k)}+\left|T_{k,m}-J_{k,m}\right|^p\mathds{1}_{\varepsilon(n,m,k)<\left|T_{k,m}-J_{k,m}\right|<2m\sqrt{n}}\right.\\
&\hspace{35ex}\left.+\left|T_{k,m}-J_{k,m}\right|^p\mathds{1}_{2m\sqrt{n}\leq\left|T_{k,m}-J_{k,m}\right|}\right)\\
&\leq \varepsilon(n,m,k)^p+(2m\sqrt{n})^p\frac{2}{n^p}+\mathds{E}\left|2J_{k,m}\right|^p\mathds{1}_{m\sqrt{n}\leq\left|J_{k,m}\right|},
\end{align*}
(since $\left|T_{k,m}\right|\leq \sqrt{n}\leq m\sqrt{n}$).  Using, once more,  \eqref{thm3} and an integration by parts, one gets for some constant $C(p)$ 
\begin{equation}
\mathds{E}\left|2J_{k,m}\right|^p\mathds{1}_{m\sqrt{n}\leq\left|J_{k,m}\right|}\leq C(p)\left(\frac{m}{\sqrt{n}}\right)^p, 
\end{equation}
and therefore, for some constant $C'(p)$, 
\begin{equation}
W_p(T_{k,m},J_{k,m})\leq \mathds{E} \left|T_{k,m}-J_{k,m}\right|^p \leq C'(p)\varepsilon(m,n,k).
\end{equation}

\item Theorem \ref{thm:newthm41} can be generalized to longest common and increasing subsequences. We use the notations and framework in \cite{DH1}. Additionally, let $Z_n={\left(LCI_n-n e_{\max}\right)}/{\sqrt{n}}$ and let $Z$ be its limiting  distribution, that is, either $Z^a$ or $Z^b$ depending on the distributions $p^X, p^Y$. The proof of Lemma 2.2 in \cite{DH1}, taking $\eta=1/12$, holds for the sequence $(\varepsilon_n)=C_1(m,p^X,p^Y)n^{-1/8}$, where $C_1(m,p^X,p^Y)$ is a constant depending on $m,p^X,p^Y$. To study the cases a) and b) under the same umbrella, let us define the function $L:\left(\mathbb{R}^m\right)^2\rightarrow \mathbb{R}$ by: for all $(v^X,v^Y)\in \left(\mathbb{R}^m\right)^2$, in case a), $L(v^X,v^Y)=\sum_{i=1}^m v^X$, and in case b), $L(v^X,v^Y)=\mathfrak{m}(v^X,v^Y)$. It is not hard to see, from the expression of $\mathfrak{m}$ in Lemma 1.5 in \cite{DH1}, that in each case, there is a constant $C_2(m,p^X,p^Y)$ such that $L$ is $C_2(m,p^X,p^Y)$-Lipschitz for the $\ell^1$-distance on $\left(\mathbb{R}^m\right)^2$. Now, as a consequence of Lemma 2.2 there, it follows that if $B^{n,X}, B^{n,Y}$ are in $E_n^{1/12}$, then 
\begin{equation}
\left| Z_n - \max_{\lambda\in U}L(V^{n,X},V^{n,Y})\right|\leq \frac{C_1(m,p^X,p^Y)}{n^{1/8}},
\end{equation}
where $U=J$ in case a) and $U=K_{\Lambda^2}$ in case b). Therefore,
\begin{equation}\label{firstbound}
\mathds{P}\left(\left| Z_n - \max_{\lambda\in U}L(V^{n,X},V^{n,Y})\right|>\frac{C_1(m,p^X,p^Y)}{n^{1/8}}\right)\leq 1-\mathds{P}\left(A^{1/12}_n\right)\leq \frac{C(m)}{n},
\end{equation}
where $C(m)$ is a constant (recalling (1.9) in \cite{DH1}). Furthermore, from the expression of the limiting distribution in Theorem 2.1, we see that in any case a) or b), the limiting distribution may be written as $L(B^X, B^Y)$. We construct, with our Lemma 2.1, two Brownian motions $\widehat{B^X_n}, \widehat{B^Y_n}$ "close" to $B^{n,X}, B^{n,Y}$, on the same probability space (this is possible by applying the lemma twice, and then taking the product space). Let $\widehat{Z_n}=\max_{\lambda\in U}L(\widehat{V^{n,X}},\widehat{V^{n,Y})}$, it has same distribution as the limiting distribution $Z$, and 
 \begin{multline}
 \mathds{P}\left(\left|\max_{\lambda\in U}L(V^{n,X},V^{n,Y})-\widehat{Z_n}\right|> 2mC_2(m,p^X,p^Y)\varepsilon\right)\leq \mathds{P}\left(\sup_{\substack{i \in\ens{m} \\ 0\leq t \leq 1}} \left| \sigma_i \widehat{B^{n,X}_i}(t)-\sigma_i B^{n,X}_i(t)\right| \geq \varepsilon \right)\\
+\mathds{P}\left(\sup_{\substack{i \in\ens{m} \\ 0\leq t \leq 1}} \left| \sigma_i \widehat{B^{n,Y}_i}(t)-\sigma_i B^{n,Y}_i(t)\right| \geq \varepsilon \right),
 \end{multline}
so in particular, applying Lemma 2.1,
\begin{equation}\label{secondbound}
\mathds{P}\left(\left|\max_{\lambda\in U}L(V^{n,X},V^{n,Y})-\widehat{Z_n}\right|> 2mC_2(m,p^X,p^Y)C\frac{\alpha(\log n)^2}{\sqrt{n}}\right)\leq \frac{4}{n^\alpha}.
\end{equation}
Putting together \eqref{firstbound} and \eqref{secondbound}, with $\alpha=1$, gives
\begin{equation}\label{thirdbound}
\mathds{P}\left(\left| Z_n - \widehat{Z_n}\right|> \frac{C_1(m,p^X,p^Y)}{n^{1/8}}+2mC_2(m,p^X,p^Y)C\frac{(\log n)^2}{\sqrt{n}}\right)\leq \frac{C(m)+4}{n}.
\end{equation}

If, just as in the single subsequence case, a bound $D(m,p^X,p^Y)$ on the density of $Z$ is possible, then we can conclude in the same way: For all $x\in \mathbb{R}$,
\begin{align*}
\left|\mathds{P}\left(Z_n\geq x\right)-\mathds{P}\left(Z\geq x\right)\right|&\leq \mathds{P}\left(\left| Z_n - \widehat{Z_n}\right|> \frac{C_1(m,p^X,p^Y)}{n^{1/8}}+2mC_2(m,p^X,p^Y)C\frac{(\log n)^2}{\sqrt{n}}\right)\\
& +\mathds{P}\left(\left|\widehat{Z_n}-x\right|\leq \frac{C_1(m,p^X,p^Y)}{n^{1/8}}+2mC_2(m,p^X,p^Y)C\frac{(\log n)^2}{\sqrt{n}}\right)\\
&\leq \frac{C(m)+4}{n}+D(m,p^X,p^Y)\left(\frac{C_1(m,p^X,p^Y)}{n^{1/8}}+2mC_2(m,p^X,p^Y)C\frac{(\log n)^2}{\sqrt{n}}\right)\\
&\leq \frac{C_3(m,p^X,p^Y)}{n^{1/8}}.
\end{align*}
Note that the exponent $-1/8$ can be improved taking a smaller $\eta$, but only up to $-1/4$. This is because in \cite{DH1}, the focus was to get  convergence in distribution, rather than a tight bound. It might even be possible to get $(\log n)^2/\sqrt{n}$ instead.

\end{enumerate}

\noindent
{\bf Acknowledgements:}  Many thanks to Jon Wellner for his bibliographical help on KMT approximations, to Ryan O'Donnell and Boris Bukh for their inquiries 
on previous results, and to the ICTS in Bengaluru as well as the GESDA program at IHP in Paris for their hospitality and support while part of this research was carried out.

\bibliographystyle{abbrv}
\bibliography{biblio}

\begin{thebibliography}{1}

\bibitem{BH2}
J.-C. Breton and C.~Houdr{\'e}.
\newblock Asymptotics for random {Young} diagrams when the word length and
  alphabet size simultaneously grow to infinity.
\newblock {\em Bernoulli}, 16(2):471--492, 2010.

\bibitem{BH}
J.-C. Breton and C.~Houdr{\'e}.
\newblock On the limiting law of the length of the longest common and
  increasing subsequences in random words.
\newblock {\em Stochastic Processes and their Applications}, 127(5):1676--1720,
  2017.

\bibitem{DH1}
C.~Deslandes and C.~Houdr{\'e}.
\newblock On the limiting law of the length of the longest common and
  increasing subsequences in random words with arbitrary distribution.
\newblock {\em Electronic Journal of Probability}, 26:1--27, 2021.

\bibitem{goodman1976distribution}
V.~Goodman.
\newblock Distribution estimates for functionals of the two-parameter {Wiener}
  process.
\newblock {\em The Annals of Probability}, 4(6):977--982, 1976.

\bibitem{HL}
C.~Houdr{\'e} and T.~J. Litherland.
\newblock On the limiting shape of {Young} diagrams associated with {Markov}
  random words.
\newblock {\em Markov Processes and Related Fields}, 26(5):779--837, 2020.

\bibitem{HT}
C.~Houdr{\'e} and Z.~Talata.
\newblock On the rate of approximation in finite-alphabet longest increasing
  subsequence problems.
\newblock {\em The Annals of Applied Probability}, 22(6):2539--2559, 2012.

\bibitem{komlos1975approximation}
J.~Koml{\'o}s, P.~Major, and G.~Tusn{\'a}dy.
\newblock An approximation of partial sums of independent rv's, and the sample
  df. {I}.
\newblock {\em Zeitschrift f{\"u}r Wahrscheinlichkeitstheorie und verwandte
  Gebiete}, 32(1):111--131, 1975.

\bibitem{shorack2009empirical}
G.~R. Shorack and J.~A. Wellner.
\newblock {\em Empirical processes with applications to statistics}.
\newblock SIAM, 2009.

\end{thebibliography}
\end{document}